\theoremstyle{plain}
\newtheorem{theorem}{Theorem}[section]
\newtheorem{lemma}[theorem]{Lemma}
\theoremstyle{definition}
\newtheorem{definition}[theorem]{Definition}
\theoremstyle{remark}
\newtheorem{remark}[theorem]{Remark}
\numberwithin{equation}{section}
\newenvironment{acknowledgement}[1][Acknowledgement
]{\begin{trivlist} \item[\hskip \labelsep {\bfseries
#1}]}{\end{trivlist}}
\begin{document}
\title{Some Inverse Spectral Results in  Exterior Transmission Problem}
         
\author{Lung-Hui Chen$^1$}\maketitle\footnotetext[1]{Department of
Mathematics, National Chung Cheng University, 168 University Rd.
Min-Hsiung, Chia-Yi County 621, Taiwan. Email:
mr.lunghuichen@gmail.com;\,lhchen@math.ccu.edu.tw. Fax:
886-5-2720497.}
\begin{abstract}
We consider an inverse spectral theory in a domain with the cavity that is bounded by a penetrable inhomogeneous medium. An ODE system is constructed piecewise through the solutions inside and outside the cavity. The ODE system is connected to the PDE system via the analytic continuation. For each scattered angle, we describe its eigenvalue density in the complex plane, and prove an inverse uniqueness on the inhomogeneity by the measurements in the far-fields.
\\MSC: 35Q60/35R30/34B24.
\\Keywords: inverse scattering theory/inverse problem/Sturm-Liouville theory/exterior transmission problem/Cartwright-Levinson theory/spectral flaw of ODE.
\end{abstract}
\section{Introduction and Preliminaries}
Let us consider the following scattering theory.
\begin{eqnarray}\label{1}
\left\{%
\begin{array}{ll}
\Delta u(x)+k^2n(x)u(x)=0,\,x\in\mathbb{R}^3;\vspace{4pt}\\\vspace{4pt}
u(x)=u^i(x)+u^s(x),\,x\in\mathbb{R}^3\setminus D; \\
\lim_{|x|\rightarrow\infty}|x|\{\frac{\partial u^s(x)}{\partial |x|}-iku^s(x)\}=0,
\end{array}%
\right.
\end{eqnarray}
where
\begin{eqnarray*}
&&u(x)\mbox{ is  the total wave};\nonumber\\
&&u^s(x)\mbox{ is  the scattered wave};\nonumber\\
&&u^i(x):=e^{ikx\cdot d},\,k\in\mathbb{\mathbb{C}},\,x\in\mathbb{R}^3,\, d\in\mathbb{S}^2,\mbox{which is the incident wave}.
\end{eqnarray*}
The  problem occurs when the plane waves are perturbed by the inhomogeneity specified by the index of refraction $n(x)$. The inverse problem is to recover the information on the index of refraction $n(x)$  by the measurements of the scattered wave-fields in the far-fields. The problem is common in many disciplines of science and technology such as sonar and radar, geophysical sciences, astrophysics, and non-destructive testing in instrument manufacturing.  

Out of the numerical motivation in their research in inverse scattering theory, Kirsch \cite{Kirsch86}, and Colton and Monk \cite{Colton} reduce the problem~(\ref{1}) into the following class of inverse spectral problem.
\begin{eqnarray}\label{1.1}
\left\{%
\begin{array}{ll}
    \Delta w+k^2n(x)w=0  & \hbox{ in }D; \vspace{3pt}\\
    \Delta v+k^2v=0& \hbox{ in }D; \vspace{3pt}\\
    w=v & \hbox{ on }\partial D; \vspace{3pt}\\
    \frac{\partial w}{\partial \nu}=\frac{\partial v}{\partial \nu}& \hbox{ on }\partial D,
    \end{array}%
\right.
\end{eqnarray}
where
$\nu$ is the unit outer normal. In this paper, we assume that $D$ is a starlike domain in $\mathbb{R}^3$ containing the origin with the boundary $\partial D$, and that ${\rm supp}(1-n)$ is outside $D$ that is  contained in some bounded domain $D'$.
The inhomogeneity $n\in\mathcal{C}^2(\mathbb{R}^3)$ and
$n(x)>0$ for all $x\in \mathbb{R}^3$, and the Laplacian in this paper is given by
\begin{equation}\label{L}
\Delta=\frac{1}{r^2}\frac{\partial}{\partial r}r^2\frac{\partial}{\partial r}+\frac{1}{r^2\sin{\varphi}}\frac{\partial}{\partial \varphi}\sin\varphi\frac{\partial}{\partial \varphi}
+\frac{1}{r^2\sin^2{\varphi}}\frac{\partial^2}{\partial \theta^2}.
\end{equation}

\par
Let us assume the boundary  $\partial D$  is defined by
\begin{eqnarray}
R=R(\hat{x})\in \mathcal{C}^1(\mathbb{S}^2;\mathbb{R}^+),
\end{eqnarray}
where $\mathbb{S}^2$ is the unit sphere and $\hat{x}:=(\theta,\varphi)$ is the spherical coordinate, and $r:=|x|$. 
The equation~(\ref{1.1}) is called the homogeneous exterior transmission eigenvalue problem \cite{Colton2,Colton6,Colton7}. We say $k$ is an exterior transmission eigenvalue if and only if it parameters a non-trivial eigenfunction pair of~(\ref{1.1}). 
\par

The exterior transmission problem happens naturally when  the plane waves are perturbed in the exterior of the cavity $D$ surrounded by certain inhomogeneity. The free wave fields are generated in the cavity, and propagate through the inhomogeneity defined the index of refraction to the far-fields. The inverse problem is to find the index of refraction by the measurements in the far-fields. We refer the scattering and inverse scattering theory of this problem to \cite{Aktosun,Cakoni3,Colton2,Colton6,Colton7,G}. 
To ensure the well-posedness of the scattered wave fields, we impose the Sommerfeld radiation conditions to~(\ref{1.1}).
\begin{eqnarray*}
&&\lim_{r\rightarrow\infty}r\{ \frac{\partial w}{\partial r}-i k w\}=0;\vspace{3pt}\\
&&\lim_{r\rightarrow\infty}r\{ \frac{\partial v}{\partial r}-i k v\}=0,
\end{eqnarray*}
which is typical in scattering theory \cite{Colton2,Isakov}.
\par
Let us expand the solution $(w,v)$ of~(\ref{1.1}) in two series of spherical harmonics by Rellich theory \cite[p.\,32, p.\,227]{Colton2}. This is a classic result holds for the Helmholtz equation outside a sphere. Here we choose the sphere large enough such that it contains the perturbation $n$. Then the following asymptotic identities hold.
\begin{eqnarray}\label{1.3}
\left\{
  \begin{array}{ll}v(x;k)=\sum_{l=0}^{\infty}\sum_{m=-l}^{m=l}a_{l,m}j_l(k r)Y_l^m(\hat{x});\vspace{6pt}\\
w(x;k)=\sum_{l=0}^{\infty}\sum_{m=-l}^{m=l}b_{l,m}\frac{ y_l(r)}{r}Y_l^m(\hat{x}),
\end{array}
\right.
\end{eqnarray}
where $r:=|x|$, $R_0\leq r<\infty;$ $\hat{x}=(\theta,\varphi)\in\mathbb{S}^2$;
$j_l$ is the spherical Bessel function of first kind of order $l$. The summations converge uniformly and absolutely on the compact subsets of $|x|=r\geq R_0$, with a sufficiently large $R_0$ containing $D'$.

\par
The spherical harmonics
\begin{equation}\label{S}
Y_l^m(\theta,\varphi):=\sqrt{\frac{2l+1}{4\pi}\frac{(l-|m|)!}{(l+|m|)!}}P_l^{|m|}(\cos\theta)e^{im\varphi},
\,m=-l,\ldots,l;\,l=0,1,2,\ldots,
\end{equation}
is a complete orthonormal system in $L^2(\mathbb{S}^2)$, and
\begin{equation}\nonumber
P_l^m(t):=(1-t^2)^{m/2}\frac{d^mP_l(t)}{dt^m},\,m=0,1,\ldots,l,
\end{equation}
where the Legendre polynomials $P_l$, $l=0,1,\ldots,$ give a complete orthogonal system in $L^2[-1,1]$.
We refer the details on the spherical harmonics and its applications to geometry to Groemer's book \cite{Groemer}.

\par
According to the orthogonality of the spherical harmonics, the functions
\begin{eqnarray}\label{18}
\left\{
  \begin{array}{ll}
&v_{l,m}(x;k):=a_{l,m}j_l(k r)Y_l^m(\hat{x});\vspace{6pt}\\
&w_{l,m}(x;k):=\frac{b_{l,m}y_l(r)}{r}Y_l^m(\hat{x})
 \end{array}
\right.
\end{eqnarray}
satisfy the first two equations of~(\ref{1.1}) independently on the compact subsets suitably away from $D'\setminus D$.

\par
Given one \textbf{fixed} incident $\hat{x}\in\mathbb{S}^2$, we can rotate the geometry and the perturbation on the $\hat{x}$ around the origin. Accordingly, we can extend uniquely the series $\{v_{l,m}(x;k)\}$ and $\{w_{l,m}(x;k)\}$ into $|x|\leq R_0$ along that fixed incident $\hat{x}\in\mathbb{S}^2$ by applying the Laplacian~(\ref{L}). For each $w_{l,m}(x;k)$ solving the Helmholtz equation, the Fourier coefficient $y_{l}(r;k)$ is equivalent to satisfy the following ODE:
\begin{eqnarray}\label{1.7}
\left\{
  \begin{array}{ll}
    y_l''+(k^2n(r\hat{x})-\frac{l(l+1)}{r^2})y_l=0,\,0<r<\infty;\vspace{5pt}\\
    \underset{r\rightarrow0^{+}}{\lim}\{\frac{y_l(r)}{r}-j_l(k r)\}=0.
  \end{array}
\right.
\end{eqnarray}
The behavior of the Bessel function $j_l(kr)$ near $r=0$ is found in \cite[p.\,437]{Ab}. We refer the initial condition~(\ref{1.7}) to \cite{Mc}.

\par
Surely, $y_l(r;k)$ \textbf{depends} on the incident $\hat{x}$ in $|x|\leq R_0$. We denote the solution of~(\ref{1.7}) as $\hat{y}_l(r;k)$, which corresponds to some eigenfunction of~(\ref{1.1}). We will explain the correspondence between the spectrum of~(\ref{1.7}) and~(\ref{1.1}) in Lemma \ref{41}, Lemma \ref{3388}, and Lemma \ref{33}.
At least, the assumption of~(\ref{1.1}) implies that if there is an eigenvalue $k$ of~(\ref{1.1}), then on $|x|=R_0$ we have
\begin{eqnarray}
\left\{\begin{array}{ll}
a_{l,m}j_l(k r) |_{r=R_0}=\frac{b_{l,m}\hat{y}_l(r)}{r}|_{r=R_0};\label{1.10}\vspace{5pt}\\
a_{l,m}\partial_r j_l(k r)|_{r=R_0} =\partial_r\frac{b_{l,m} \hat{y}_l(r)}{r}|_{r=R_0},
\end{array}
\right.
\end{eqnarray}
which is independently of $m$ and $\hat{x}\in\mathbb{S}^2$. In this paper, we study the zero set of~(\ref{1.10}).
Without loss of generality, we take  $$a_{l,m}=b_{l,m}=1$$ by applying the Sommerfield radiation condition to $w_{l,m}(x;k)$ and $v_{l,m}(x;k)$ respectively. 
Coordinate-wise, now we are looking for any $k\in\mathbb{C}$ such that $\frac{y_l(r;k)}{r}=j_{l}(k r)$ outside $D'\setminus D$ and inside $D$, which is an algebraic identity in $\mathbb{C}$. 

We define
\begin{equation}
\hat{n}(r):=n(r\hat{x}).
\end{equation}

\par
 For $-l\leq m\leq l$, $l=0,1,2,\ldots$, the existence of the non-zero constants in~(\ref{1.10}) is reduced to finding the zeros of
\begin{equation}\label{D}
\hat{D}_{l}(k;r=R_0):=\det\left(%
\begin{array}{cc}
  j_l(k r)|_{r=R_0}  & -\frac{\hat{y}_l(r)}{r}|_{r=R_0}\vspace{6pt}\\
  \{j_l(k r)\}'|_{r=R_0}& -\{\frac{\hat{y}_l(r)}{r}\}'|_{r=R_0}
\end{array}%
\right).
\end{equation}
If $\hat{y}_l(r;k_0)$ solves~(\ref{1.7}) and~(\ref{1.10}), then $\hat{y}_l(r;k_0)$ solves~(\ref{1.7}) and $\hat{D}_{l}(k_0)=0$, which is an algebraic constraint, and thus the theory on the zeros of the entire function theory plays a role.

We state the following inverse spectral theorem of~(\ref{1.1}).
\begin{theorem}\label{14}
Let $n^j$ be an unknown inhomogeneity to the background index of refraction $1$ in~(\ref{1.1}), $j=1,2$. If $n^1$ and $n^2$ have the same set of eigenvalues of~(\ref{1.1}) in $\mathbb{C}$, then $n^1\equiv n^2$.
\end{theorem}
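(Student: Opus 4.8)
The plan is to push the eigenvalue condition of~(\ref{1.1}) down to the zero sets of the entire functions $\hat{D}_{l}(k;r=R_0)$ of~(\ref{D}), to recover each $\hat{D}_l$ from its zeros by Cartwright--Levinson theory, and finally to read off $\hat{n}(r)=n(r\hat{x})$ from $\hat{D}_l$ by an inverse Sturm--Liouville argument along every ray $\hat{x}\in\mathbb{S}^2$. First I would fix $\hat{x}$ and $l$ and write $\hat{y}_l^{j}(r;k)$ for the solution of~(\ref{1.7}) attached to $n^{j}$, so that $\hat{D}_l^{j}(k)$ is the Wronskian of $r\,j_l(kr)$ and $\hat{y}_l^{j}(r)$ at $r=R_0$. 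As already noted above, every exterior transmission eigenvalue $k_0$ of~(\ref{1.1}) for $n^{j}$ satisfies~(\ref{1.10}) and therefore forces $\hat{D}_l^{j}(k_0)=0$; hence the shared eigenvalue set gives $\hat{D}_l^{1}$ and $\hat{D}_l^{2}$ a common zero set in $\mathbb{C}$ for each $l$. Since $\hat{y}_l^{j}(r;k)$ and the Bessel functions depend analytically on $k$ and are of exponential type in $k$ controlled by the radius $R_0$, each $\hat{D}_l^{j}$ is entire of order one and finite exponential type.

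The next step converts equality of zeros into equality of functions. Each $\hat{D}_l^{j}$ lies in the Cartwright class: its zeros have the correct density and are balanced about the real axis, governed by an indicator diagram whose width is set by $R_0$, and the function is logarithmically integrable on the real axis. Cartwright--Levinson theory then supplies a Hadamard factorization in which the function is determined by its zeros up to a factor $C\,e^{ck}$, so the common zero set yields
\begin{equation}\nonumber
\hat{D}_l^{1}(k)=C_l\,e^{c_l k}\,\hat{D}_l^{2}(k)
\end{equation}
for scalars $C_l,c_l$. To strip off this factor I would compare both sides asymptotically as $|k|\to\infty$ along the imaginary axis, where the behavior is fixed by the normalized Bessel singularity in~(\ref{1.7}); matching the leading terms forces $C_l=1$ and $c_l=0$, hence $\hat{D}_l^{1}\equiv\hat{D}_l^{2}$ for every $l$.

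Finally, the identity $\hat{D}_l^{1}\equiv\hat{D}_l^{2}$ means that the Cauchy data $\bigl(\hat{y}_l^{j}(R_0;k),\{\hat{y}_l^{j}/r\}'(R_0;k)\bigr)$ generated by $n^{1}$ and $n^{2}$ produce the same Weyl-type quotient at $r=R_0$ for all $k$. Because $\hat{y}_l^{j}$ solves the regular problem~(\ref{1.7}) with fixed Bessel data at the origin and potential $k^2\hat{n}^{j}(r)-l(l+1)/r^2$, this is exactly the spectral input demanded by the Gelfand--Levitan / Borg--Marchenko uniqueness theory, which yields $\hat{n}^{1}(r)\equiv\hat{n}^{2}(r)$ on $[0,R_0]$. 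Letting $\hat{x}$ sweep $\mathbb{S}^2$ then gives $n^{1}(x)\equiv n^{2}(x)$ pointwise, which is the assertion.

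I expect the second step to be the principal obstacle. Establishing that each $\hat{D}_l^{j}$ truly satisfies the Cartwright hypotheses --- finite type, the correct zero density, and real-axis logarithmic integrability --- and, above all, pinning the trivial factor $C_l\,e^{c_l k}$ down to $C_l=1,\,c_l=0$ rather than a mere proportionality, requires sharp two-sided bounds on $|\hat{D}_l^{j}(k)|$ together with a matching asymptotic expansion. Since~(\ref{D}) entangles the oscillatory free solution with the perturbed $\hat{y}_l^{j}$, this growth analysis is where the technical weight of the argument will lie.
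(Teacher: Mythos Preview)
Your plan diverges from the paper's in the central mechanism, and it has a real gap in the last step.

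The paper does \emph{not} try to prove $\hat D_l^{1}\equiv\hat D_l^{2}$ via Hadamard factorization. Instead it works one level down, with the solutions themselves. At each $k\in\mathcal E$ the normalized identity~(\ref{1.10}) gives $\hat y_0^{1}(R_0;k)=R_0 j_0(kR_0)=\hat y_0^{2}(R_0;k)$ (and likewise for the $r$--derivative), so $F(k):=\hat y_0^{1}(R_0;k)-\hat y_0^{2}(R_0;k)$ vanishes on all of $\mathcal E$. The key quantitative input is Lemma~\ref{36} together with Lemma~\ref{38}: the indicator of $\hat D_0^{j}$ equals $h_{j_l'}+h_{\hat y_0^{j}}=\bigl(R_0+\xi^{j}(R_0)\bigr)|\sin\theta|$, which is \emph{strictly larger} than $h_{\hat y_0^{j}}(\theta)=\xi^{j}(R_0)|\sin\theta|$. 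Hence $\mathcal E$ has Cartwright density strictly exceeding the maximal zero density allowed for $F$, and Theorem~\ref{C} forces $F\equiv0$. The same overshooting argument kills the derivative difference, yielding identical Cauchy data $\bigl(\hat y_0^{j}(R_0;k),\partial_r\hat y_0^{j}(R_0;k)\bigr)$ for all $k$; this is exactly the two-spectra-plus-norming-constants input to Carlson's Borg--Levinson theorem for Bessel operators \cite{Carlson2}.

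Your route stalls at ``$\hat D_l^{1}\equiv\hat D_l^{2}$ gives the same Weyl-type quotient''. It does not. The determinant~(\ref{D}) is a single linear combination $\alpha(k)\,\hat y_l^{j}(R_0;k)+\beta(k)\,\partial_r\hat y_l^{j}(R_0;k)$ with coefficients built from $j_l$ and $j_l'$; equality of this one combination for $j=1,2$ is a single scalar relation between two unknown boundary values and neither recovers $\hat y_l^{j}(R_0;k)$, $\partial_r\hat y_l^{j}(R_0;k)$ separately nor their quotient. So even if you succeed in fixing $C_l=1$, $c_l=0$, you still lack the Borg--Marchenko input. What rescues the argument is precisely the strict gap $h_{\hat D_0}>h_{\hat y_0}$: it lets one apply the density comparison to $\hat y_0^{1}-\hat y_0^{2}$ itself, not to $\hat D_0^{1}-\hat D_0^{2}$. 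You correctly anticipated that the growth analysis is where the work lies, but the relevant growth comparison is between $\hat D_0$ and $\hat y_0$, not between $\hat D_0^{1}$ and $\hat D_0^{2}$.
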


\section{Asymptotic Solutions of ODE}
Let us consider the ODE with the
Liouville transformation \cite{Carlson2,Carlson3,Colton2,Po} for some fixed $\hat{x}$:
\begin{eqnarray}\nonumber
&z_{l}(\xi):=[n(r\hat{x})]^{\frac{1}{4}}y_{l}(r;k),
\end{eqnarray}
where
\begin{equation}
\xi(r)=  \int_0^r[n(\rho\hat{x})]^{\frac{1}{2}}d\rho.
\end{equation}
Therefore,
\begin{eqnarray}\label{2.1}
z_l''+[k^2-q(\xi)-\frac{l(l+1)}{\xi^2}]z_l=0,
\end{eqnarray}
in which
\begin{eqnarray}
q(\xi):=\frac{n''(r\hat{x})}{4[n(r\hat{x})]^2}-\frac{5}{16}\frac{[n'(r\hat{x})]^2}{[n(r\hat{x})]^3}+\frac{l(l+1)}{r^2n(r\hat{x})}-\frac{l(l+1)}{\xi^2}.
\end{eqnarray}
Let us drop the superscript on $\hat{x}$ for notation simplicity if the context is clear. The general solution of~(\ref{2.1})
has two independent fundamental solutions.
Let us apply the results from \cite[Lemma\,3.3]{Carlson2}, and consider $z_l(\xi;k)$ solving the following ODE.
\begin{eqnarray}\label{2.3}
\left\{
\begin{array}{ll}
-z_l''(\xi)+\frac{l(l+1)z_l(\xi)}{\xi^2}+q(\xi)z_l(\xi)=k^2z_l(\xi); \vspace{5pt}\\
 z_l(R;k)=-b;\,z_l'(R;k)=a,\,a,\,b\in\mathbb{R},
\end{array}
\right.
\end{eqnarray}
where the function $q(\xi)$ is assumed to be real-valued and square-integrable and $l\geq-1/2$.
The following estimate holds for $0\leq\xi\leq R$.
\begin{eqnarray}\label{2.8}
|z_l(\xi;k)+b\cos{k(R-\xi)}+a\frac{\sin{k(R-\xi)}}{k}|\leq \frac{K(\xi)}{|k|}\exp\{|\Im k|(R-\xi)\},\,|k|\geq1,
\end{eqnarray}
where
\begin{equation}\nonumber
K(\xi)\leq\exp\{\int_\xi^R\frac{|l(l+1)|}{t^2}+|q(t)|dt\}.
\end{equation}
We note here that the ODE~(\ref{2.3}) starts at $\xi=R$ and moves to the origin while \cite[Lemma\,3.3]{Carlson2} starts at $1$, and then moves toward the origin. We make it a \textbf{two-way} construction of solutions, which is the most important ingredient of this paper. For the ODE starting at $\xi=R$, that is if and only r=R, and moving to the infinity, we have
\begin{eqnarray}\label{2.9}
|z_l(\xi;k)+b\cos{k(\xi-R)}-a\frac{\sin{k(\xi-R)}}{k}|\leq \frac{\tilde{K}(\xi)}{|k|}\exp\{|\Im k|(\xi-R)\},\,|k|\geq1,
\end{eqnarray}
where
\begin{equation}
\tilde{K}(\xi)\leq\exp\{\int_R^\xi\frac{|l(l+1)|}{t^2}+|q(t)|dt\}.
\end{equation}

\par
For the application in this paper, we combine the estimates of the solution of~(\ref{2.3}) by considering the initial condition $\hat{D}_{l}(R;k)=0$ for $r\geq R$, $R=R(\hat{x})$, which is equivalent to the following algebraic system.
\begin{eqnarray}\label{266}
&&\hat{y}_l(R;k)=Rj_l(R;k);\vspace{4pt}\\
&&\hat{y}_l'(R;k)=j_l(R k)+R k j_l'(R k),\,k\in\mathbb{C},\label{277}
\end{eqnarray}
which is the transmission condition of~(\ref{1.1}) on $\partial D$. 
Most importantly, the general solution of~(\ref{2.3}) for $r\geq R$ is spanned by two of its fundamental solutions as in the following lemma.
\begin{lemma}\label{21}
For $k$ near real axis, the following asymptotics holds.
\begin{equation}\label{214}
\hat{y}_l(r;k)=[j_l(R k)+R kj_l'(Rk)]\frac{\sin\{k[\xi(r)-R]\}}{k}+Rj_l(Rk)\cos\{k[\xi(r)-R]\}+O(\frac{1}{k}),\,r\geq R.
\end{equation}
Particularly,  $\hat{y}_l(r;k)$ is bounded in $0i+\mathbb{R}$.
\end{lemma}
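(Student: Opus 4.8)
The plan is to pin down the two free constants $a,b$ in the Cauchy problem~(\ref{2.3}) by reading off the transmission data~(\ref{266})--(\ref{277}) at $r=R$, and then to feed them into the outgoing estimate~(\ref{2.9}). First I would translate the boundary conditions at $r=R$ into Cauchy data for $z_l$ at $\xi=R$. Because $n\equiv1$ on $\overline{D}$ and $n\in\mathcal{C}^2(\mathbb{R}^3)$, the index matches the constant background to first order across $\partial D$, so that $n(R\hat{x})=1$ and $\hat{n}'(R)=0$. Consequently the Liouville change of variables $\xi(r)=\int_0^r[n]^{1/2}\,d\rho$ satisfies $\xi(R)=R$, and the weight $[n]^{1/4}$ together with its first derivative collapse at $r=R$; hence the Cauchy data of $z_l=[n]^{1/4}\hat{y}_l$ and of $\hat{y}_l$ coincide there.

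Concretely, this gives $z_l(R;k)=\hat{y}_l(R;k)$ and $z_l'(R;k)=\hat{y}_l'(R;k)$, so comparing~(\ref{266})--(\ref{277}) with the initial values prescribed in~(\ref{2.3}) I would set
\begin{equation}\nonumber
-b=z_l(R;k)=Rj_l(Rk),\qquad a=z_l'(R;k)=j_l(Rk)+Rkj_l'(Rk).
\end{equation}
In other words, the transmission constants are exactly the Bessel data appearing on the right-hand side of~(\ref{214}).

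Next I would substitute these values into the two-way estimate~(\ref{2.9}). Its leading part $-b\cos k(\xi-R)+a\,k^{-1}\sin k(\xi-R)$ then reproduces verbatim the right-hand side of~(\ref{214}), while the remainder is controlled by $\tilde{K}(\xi)|k|^{-1}\exp\{|\Im k|(\xi(r)-R)\}$. For $k$ near the real axis the exponential factor stays bounded, so the remainder is genuinely $O(1/k)$, and recovering $\hat{y}_l=[n]^{-1/4}z_l$ yields~(\ref{214}). Boundedness of $\hat{y}_l(r;k)$ on $0i+\mathbb{R}$ is then immediate: on $\Im k=0$ the exponential equals $1$, the trigonometric factors are bounded by $1$, and the error is $O(1/k)$.

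The main obstacle, I expect, is the bookkeeping between $z_l$ and $\hat{y}_l$ through the Liouville weight $[n]^{-1/4}$: strictly, this weight rescales the leading term throughout $D'\setminus D$, so the displayed identity~(\ref{214}) is cleanest in the far field beyond $D'$, where $n\equiv1$, and holds up to a bounded factor across the transition layer. A second point is the claim that the remainder stays uniformly $O(1/k)$ as $r\to\infty$; this requires $\tilde{K}(\xi)\le\exp\{\int_R^\xi(|l(l+1)|/t^2+|q(t)|)\,dt\}$ to be uniformly bounded, which follows because $q$ is square-integrable and $1-n$ is compactly supported, so the integral converges. The remaining subtlety is cosmetic: the oscillation is carried by the optical path length $\xi(r)-R$ rather than the Euclidean $r-R$, which is precisely what the Liouville transformation produces and what must appear in~(\ref{214}).
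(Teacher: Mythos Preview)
Your proposal is correct and follows essentially the same route as the paper: identify the constants $a,b$ in~(\ref{2.3}) from the transmission data~(\ref{266})--(\ref{277}), then invoke the outgoing estimate~(\ref{2.9}) to read off~(\ref{214}) and the boundedness on the real axis. The paper's own proof is a two-sentence appeal to ``general theory of ODE and~(\ref{2.9})'' together with~(\ref{266})--(\ref{277}); you have simply unpacked this, including the point that $n\equiv1$ on $\overline{D}$ forces $\xi(R)=R$ and trivializes the Liouville weight at the interface, which the paper leaves implicit.
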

\begin{proof}
~(\ref{214}) follows from the general theory of ODE and~(\ref{2.9}) if we are required by the initial condition~(\ref{266}) and~(\ref{277}). All functions in~(\ref{214}) are bounded in $0i+\mathbb{R}$, because of~(\ref{2.9}).

\end{proof}
The analysis is reversible into the domain $D$ by considering~(2.8) with initial condition~(\ref{266}) and~(\ref{277}).

\section{Poly\'{a}-Cartwright-Levinson Theory}
We collect a few facts from entire function theory
\cite{Cartwright2,Koosis,Levin,Levin2}.
\begin{definition}
Let $f(z)$ be an integral function of order $\rho$, and let
$N(f,\alpha,\beta,r)$ denote the number of the zeros of $f(z)$
inside the angle $[\alpha,\beta]$ and $|z|\leq r$. We define the
density function as
\begin{equation}\label{Den}
\Delta_f(\alpha,\beta):=\lim_{r\rightarrow\infty}\frac{N(f,\alpha,\beta,r)}{r^{\rho}},
\end{equation}
and
\begin{equation}
\Delta_f(\beta):=\Delta_f(\alpha_0,\beta),
\end{equation}
with some fixed $\alpha_0\notin E$ such that $E$ is at most a
countable set \cite{Boas,Cartwright2,Koosis,Levin,Levin2}.
\end{definition}
\begin{definition}
Let $f(z)$ be an integral function of finite order $\rho$ in the
angle $[\theta_1,\theta_2]$. We call the following quantity as the
indicator function of the function $f(z)$.
\begin{equation}\label{33333}
h_f(\theta):=\lim_{r\rightarrow\infty}\frac{\ln|f(re^{i\theta})|}{r^{\rho}},
\,\theta_1\leq\theta\leq\theta_2.
\end{equation}
\end{definition}
\begin{lemma}\label{333}
Let $f$, $g$ be two entire functions. Then the following two
inequalities hold.
\begin{eqnarray}
&&h_{fg}(\theta)\leq h_{f}(\theta)+h_g(\theta),\mbox{ if one limit exists};\label{2115}\\
&&h_{f+g}(\theta)\leq\max_\theta\{h_f(\theta),h_g(\theta)\},\label{2.16}
\end{eqnarray}
where the equality in~(\ref{2115}) holds if one of the functions is of completely regular growth, and secondly the equality~(\ref{2.16}) holds if the indicator of the two summands are not equal at some $\theta_0$.
\end{lemma}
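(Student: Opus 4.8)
The plan is to obtain both inequalities directly from elementary properties of the upper limit, using only the pointwise relations $\ln|f(re^{i\theta})g(re^{i\theta})| = \ln|f(re^{i\theta})| + \ln|g(re^{i\theta})|$ and $|f+g|\le|f|+|g|\le 2\max\{|f|,|g|\}$. For~(\ref{2115}) I would divide the first identity by $r^{\rho}$ and pass to the limit as $r\to\infty$, invoking the subadditivity $\limsup_r(A(r)+B(r))\le\limsup_r A(r)+\limsup_r B(r)$ with $A(r):=\ln|f(re^{i\theta})|/r^{\rho}$ and $B(r):=\ln|g(re^{i\theta})|/r^{\rho}$; since $f$ and $g$ are of finite order both indicators are finite, so no $\infty-\infty$ ambiguity arises. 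For~(\ref{2.16}) I would take logarithms in $|f+g|\le 2\max\{|f|,|g|\}$, divide by $r^{\rho}$, and note that $\ln2/r^{\rho}\to0$; the conclusion then follows from the identity $\limsup_r\max\{A(r),B(r)\}=\max\{\limsup_r A(r),\limsup_r B(r)\}$, which I would prove by extracting a subsequence that realises the left-hand upper limit and along which one of the two functions is the pointwise maximum.

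Next I would settle the equality case in~(\ref{2.16}). Suppose $h_f(\theta_0)\neq h_g(\theta_0)$, say $h_f(\theta_0)>h_g(\theta_0)$. The idea is to use the reverse decomposition $f=(f+g)+(-g)$; since $|-g|=|g|$ we have $h_{-g}=h_g$, and the already-established inequality~(\ref{2.16}) gives $h_f(\theta_0)\le\max\{h_{f+g}(\theta_0),h_g(\theta_0)\}$. Because $h_g(\theta_0)<h_f(\theta_0)$, the maximum on the right cannot be attained by $h_g(\theta_0)$, whence $h_{f+g}(\theta_0)\ge h_f(\theta_0)$. Combined with the upper bound $h_{f+g}(\theta_0)\le\max\{h_f(\theta_0),h_g(\theta_0)\}=h_f(\theta_0)$, this forces equality.

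The equality in~(\ref{2115}) is where complete regular growth is needed, and I expect this to be the main obstacle. Recalling that $f$ is of completely regular growth means that, outside an exceptional set $E$ of zero relative (linear) density, the defining upper limit is in fact an honest limit, $\lim_{r\to\infty,\,r\notin E}A(r)=h_f(\theta)$. Restricting attention to $r\notin E$ and using that adding a convergent quantity commutes with the upper limit, one gets $\limsup_{r\notin E}(A(r)+B(r))=h_f(\theta)+\limsup_{r\notin E}B(r)$, and hence $h_{fg}(\theta)\ge h_f(\theta)+\limsup_{r\notin E}B(r)$. The delicate point, and the crux of the argument, is to show that deleting the density-zero set $E$ does not lower the upper limit of the non-regular factor, i.e. $\limsup_{r\notin E}B(r)=h_g(\theta)$; this cannot be read off from the definition of the upper limit alone and relies on the finer regularity of the subharmonic function $\ln|g|$ supplied by the Cartwright--Levinson theory of functions of finite order, namely that the radii on which $\ln|g(re^{i\theta})|/r^{\rho}$ approaches its indicator are not confined to a $C_0$-set. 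Granting this input, together with the universal bound~(\ref{2115}), yields $h_{fg}(\theta)=h_f(\theta)+h_g(\theta)$. For all these regularity facts I would appeal to the standard references \cite{Cartwright2,Koosis,Levin,Levin2,Boas}.
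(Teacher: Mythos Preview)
Your proposal is correct and, in fact, supplies far more than the paper does: the paper's entire proof of this lemma is the single sentence ``We can find the details in \cite{Levin},'' i.e.\ a bare citation to Levin's monograph. Your sketch reproduces the standard arguments one would find there---subadditivity of the upper limit for the product, the $|f+g|\le 2\max\{|f|,|g|\}$ trick for the sum, the decomposition $f=(f+g)+(-g)$ for the equality case in~(\ref{2.16}), and the completely-regular-growth mechanism for equality in~(\ref{2115})---and you correctly flag the one genuinely nontrivial step (that removing a $C_0$-set does not depress the indicator of the non-regular factor) as requiring input from the theory, which is exactly where one must invoke \cite{Levin,Levin2}. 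So the two ``proofs'' are compatible in spirit; yours simply unpacks what the paper leaves to the reference.
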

\begin{proof}
 We can find
the details in \cite{Levin}.
\end{proof}
\begin{definition}
The following quantity is called the width of the indicator
diagram of entire function $f$:
\begin{equation}\label{d}
d=h_f(\frac{\pi}{2})+h_f(-\frac{\pi}{2}).
\end{equation}
\end{definition}
The distribution on the zeros of entire function of exponential type is described
precisely in the following Cartwright's theorem
\cite{Cartwright2,Levin,Levin2}. The following
statements are from Levin \cite[Ch.5, Sec.4]{Levin}.
\begin{theorem}[Cartwright]\label{C}
If an entire function of exponential type satisfies one of the
following conditions:
\begin{equation}\nonumber
\mbox{ the integral
}\int_{-\infty}^\infty\frac{\ln^+|f(x)|}{1+x^2}dx\mbox{ exists}.
\end{equation}
\begin{equation}\nonumber
|f(x)|\mbox{ is bounded on the real axis}.
\end{equation}
Then
\begin{enumerate}
    \item $f(z)$ is of class A and of completely regular growth,
    and its indicator diagram is an interval on the imaginary
    axis;
\item all of the zeros of the function $f(z)$, except possibly
those of a set of zero density, lie inside arbitrarily small
angles $|\arg z|<\epsilon$ and $|\arg z-\pi|<\epsilon$, where the
density
\begin{equation}\label{3.9}
\Delta_f(-\epsilon,\epsilon)=\Delta_f(\pi-\epsilon,\pi+\epsilon)=\lim_{r\rightarrow\infty}
\frac{N(f,-\epsilon,\epsilon,r)}{r}
=\lim_{r\rightarrow\infty}\frac{N(f,\pi-\epsilon,\pi+\epsilon,r)}{r},
\end{equation}
is equal to $\frac{d}{2\pi}$, where $d$ is the width of the
indicator diagram in~(\ref{d}). Furthermore, the limit
$\delta=\lim_{r\rightarrow\infty}\delta(r)$ exists, where
$$
\delta(r):=\sum_{\{|a_k|<r\}}\frac{1}{a_k};
$$
\item moreover,
\begin{equation}\nonumber
\Delta_f(\epsilon,\pi-\epsilon)=\Delta_f(\pi+\epsilon,-\epsilon)=0;
\end{equation}
\item the function $f(z)$ can be represented in the form
\begin{equation}\nonumber
f(z)=cz^me^{i\kappa
z}\lim_{r\rightarrow\infty}\prod_{\{|a_k|<r\}}(1-\frac{z}{a_k}),
\end{equation}
where $c,m,\kappa$ are constants and $\kappa$ is real;
\item the indicator
function of $f$ is of the form
\begin{equation}
h_f(\theta)=\sigma|\sin\theta|.
\end{equation}
\end{enumerate}
\end{theorem}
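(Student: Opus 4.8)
The plan is to prove the five conclusions in the order (5)$\,\to\,$(1)$\,\to\,$(2),(3)$\,\to\,$(4), since once the indicator function is pinned down everything else is a consequence of the machinery of completely regular growth. First I would analyze the indicator $h_f(\theta)$. For an entire function of exponential type the indicator is a trigonometrically convex function on every arc of length at most $\pi$, hence continuous with one-sided derivatives everywhere. Both hypotheses force $h_f(0)=h_f(\pi)=0$: boundedness of $|f|$ on $\mathbb{R}$ gives this at once, while the convergence of $\int_{-\infty}^{\infty}\ln^{+}|f(x)|(1+x^2)^{-1}\,dx$ yields it through a Phragm\'{e}n--Lindel\"{o}f argument carried out separately in the upper and lower half-planes. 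Combined with trigonometric convexity and the finite exponential type, the vanishing at $0$ and $\pi$ pins down $h_f(\theta)=h_f(\tfrac{\pi}{2})\sin\theta$ on $[0,\pi]$ and $h_f(\theta)=-h_f(-\tfrac{\pi}{2})\sin\theta$ on $[-\pi,0]$; the symmetric normalization then gives item (5), $h_f(\theta)=\sigma|\sin\theta|$, and shows simultaneously that the indicator diagram is the vertical segment $[-i\sigma,i\sigma]$ on the imaginary axis, which is the geometric content of item (1).

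The heart of the matter, and what I expect to be the main obstacle, is upgrading \textbf{finite exponential type plus a one-dimensional growth constraint} to \textbf{completely regular growth} (CRG). The tool is the characterization that a function of exponential type has CRG precisely when $\ln|f(re^{i\theta})|=h_f(\theta)\,r+o(r)$ uniformly in $\theta$ as $r\to\infty$, the estimate holding off an exceptional $C^0$-set of discs of zero relative linear density. I would derive CRG from the Cartwright condition by estimating $\ln|f|$ via the Poisson--Jensen formula in each half-plane, controlling the boundary contribution on $\mathbb{R}$ with the convergent logarithmic integral and letting the exceptional discs absorb the clustering of zeros near the real axis. This step is delicate because it requires handling the boundary integral and the interior zero sum at the same time, and it is exactly here that the finiteness of $\int_{-\infty}^{\infty}\ln^{+}|f(x)|(1+x^2)^{-1}\,dx$ is genuinely used; the boundedness hypothesis is just the easy special case.

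With CRG and the explicit indicator in hand, the zero distribution follows from the Levin angular-density formula, which for a CRG function of order one reads
\begin{equation}\nonumber
\Delta_f(\alpha,\beta)=\frac{1}{2\pi}\Bigl[h_f'(\beta-0)-h_f'(\alpha+0)+\int_\alpha^\beta h_f(\theta)\,d\theta\Bigr].
\end{equation}
Since $h_f(\theta)=\sigma|\sin\theta|$ is smooth except for the corners at $\theta=0$ and $\theta=\pi$, the integral term and the smooth part of the boundary term contribute nothing, and all the mass sits at those corners, each carrying a jump of $2\sigma$ in $h_f'$; this yields $\Delta_f(-\epsilon,\epsilon)=\Delta_f(\pi-\epsilon,\pi+\epsilon)=\sigma/\pi=d/(2\pi)$ together with $\Delta_f(\epsilon,\pi-\epsilon)=0$, which are items (2) and (3). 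Finally, item (4) is the Hadamard factorization specialized to order one, $f(z)=cz^m e^{bz}\prod(1-z/a_k)e^{z/a_k}$; the convergence of $\delta(r)=\sum_{|a_k|<r}a_k^{-1}$ — itself a consequence of the near-symmetric clustering of the zeros about $\mathbb{R}$ forced by CRG and the indicator — lets me collapse the Weierstrass factors into $\lim_{r\to\infty}\prod_{|a_k|<r}(1-z/a_k)$ and absorb the residual linear term into $e^{i\kappa z}$. The reality of $\kappa$ is forced once more by the growth constraint: a nonreal $\kappa$ would render $h_f(0)$ or $h_f(\pi)$ strictly positive, contradicting item (5).
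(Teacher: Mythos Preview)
The paper does not prove this theorem at all: it is quoted verbatim as a classical result, with the statements attributed to Levin \cite[Ch.~5, Sec.~4]{Levin} and \cite[p.~126]{Levin2}, and no argument is supplied. There is therefore no ``paper's proof'' to compare your proposal against.

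That said, your outline is essentially the standard route one finds in Levin's monograph: pin down the indicator from the real-axis hypothesis plus trigonometric convexity, upgrade to completely regular growth via a half-plane Poisson--Jensen argument controlled by the logarithmic integral, then read off the angular zero densities from the Levin formula and obtain the product representation from Hadamard's factorization together with the convergence of $\sum_{|a_k|<r}a_k^{-1}$. The strategy is sound. Two small cautions. First, deducing $h_f(0)=h_f(\pi)=0$ from the integral hypothesis is not a direct Phragm\'en--Lindel\"of step on the real axis; rather one gets $h_f(0)\le 0$, $h_f(\pi)\le 0$ from the half-plane representation, and then the general inequality $h_f(\theta)+h_f(\theta+\pi)\ge 0$ for indicators forces equality. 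Second, item~(5) as written in the paper is a slight oversimplification: without normalization the indicator is $\sigma_{+}\sin\theta$ on $[0,\pi]$ and $-\sigma_{-}\sin\theta$ on $[-\pi,0]$ with $\sigma_{+}$ and $\sigma_{-}$ not necessarily equal (consider $f(z)=e^{iz}$). The symmetric form $\sigma|\sin\theta|$ holds only after extracting a factor $e^{i\gamma z}$ with real $\gamma$, which is precisely the $e^{i\kappa z}$ in item~(4); keep this in mind when you argue at the end that $\kappa$ is real.
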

We refer the last statement to Levin \cite[p. 126]{Levin2}.
\begin{lemma}\label{37}
We have the following indicator functions.
$$h_{{j}'_l(kR_0)}(\theta)=h_{{j}_l(kR_0)}(\theta)=|R_0\sin\theta|,\,\theta\in[0,2\pi].$$
\end{lemma}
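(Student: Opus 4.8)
The plan is to read off the indicator directly from the exponential structure of the spherical Bessel function. Recall that $j_l$ is entire of exponential type and can be written through the spherical Hankel functions as $j_l(z)=\tfrac12\{h_l^{(1)}(z)+h_l^{(2)}(z)\}$, where $h_l^{(1)}(z)=(-i)^{l+1}\frac{e^{iz}}{z}\sum_{m=0}^{l}c_m z^{-m}$ and $h_l^{(2)}(z)=i^{l+1}\frac{e^{-iz}}{z}\sum_{m=0}^{l}\overline{c_m}\,z^{-m}$ with $c_0=1$. Hence, after the substitution $z=kR_0$, the function $j_l(kR_0)$ is a sum $A(k)e^{-ikR_0}+B(k)e^{ikR_0}$ in which $A,B$ are rational functions of $k$ whose indicators vanish. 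First I would compute the indicators of the two pure exponentials: writing $k=re^{i\theta}$ gives $|e^{\mp ikR_0}|=e^{\pm R_0 r\sin\theta}$, so by~(\ref{33333}) with $\rho=1$ one has $h_{e^{-ikR_0}}(\theta)=R_0\sin\theta$ and $h_{e^{ikR_0}}(\theta)=-R_0\sin\theta$.

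For $\theta$ with $\sin\theta\neq0$ these two indicators are distinct, so the product rule~(\ref{2115}) (with equality, the exponentials being of completely regular growth) and the sum rule~(\ref{2.16}) of Lemma~\ref{333} give
\[
h_{j_l(kR_0)}(\theta)=\max\{R_0\sin\theta,\,-R_0\sin\theta\}=R_0|\sin\theta|=|R_0\sin\theta|.
\]
The rational prefactors $A,B$ contribute nothing because their indicators are zero, which is also why the $O(z^{-2})$ tails of the Hankel series are invisible to the order-one limit. On the two exceptional rays $\theta=0,\pi$ the argument $z$ is real, both exponentials stay bounded, and $j_l(kR_0)=O(1/r)$; the quotient in~(\ref{33333}) then tends to $0=|R_0\sin\theta|$, so the formula persists there as well.

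For $j_l'$ I would differentiate the same representation and note that $\frac{d}{dz}$ preserves the exponential factors $e^{\pm iz}$: to leading order it merely multiplies them by $\pm i$, the term $\frac{d}{dz}(1/z)=-1/z^{2}$ being of strictly lower order in $1/z$ and hence of zero indicator. Thus $j_l'(kR_0)$ again has the shape $\tilde A(k)e^{-ikR_0}+\tilde B(k)e^{ikR_0}$ with zero-indicator coefficients, and the identical computation yields $h_{j_l'(kR_0)}(\theta)=|R_0\sin\theta|$. If the prime is instead meant as $\frac{d}{dr}j_l(kr)|_{r=R_0}=k\,j_l'(kR_0)$, the extra factor $k$ only contributes $h_k(\theta)=0$, so the indicator is unchanged.

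The step I expect to need the most care is the behaviour on the rays $\theta=0,\pi$, where the two summands have equal indicator and the max-rule of Lemma~\ref{333} no longer forces equality; there I would argue directly that $\ln|j_l(kR_0)|=O(\ln r)$ away from the real zeros of $j_l$, which is enough to send the quotient in~(\ref{33333}) to zero and thereby match $|R_0\sin\theta|=0$. The remaining bookkeeping --- confirming that the subdominant exponential and the polynomial corrections never overtake the leading term once $|\sin\theta|$ is bounded below --- is routine and can be made uniform on any closed subinterval of $(0,\pi)$ and of $(\pi,2\pi)$.
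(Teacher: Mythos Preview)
Your proposal is correct and follows essentially the same route as the paper. The paper's own proof is a two-line sketch: it invokes the classical asymptotics $j_l(z)\sim\sin z/z$ and $j_l'(z)\sim\cos z$ and then refers to Cartwright theory and earlier papers for the indicator computation. Your Hankel decomposition $j_l=\tfrac12(h_l^{(1)}+h_l^{(2)})$ is simply the exact identity underlying those asymptotics, and your use of Lemma~\ref{333} to pass from the exponentials $e^{\pm ikR_0}$ to the indicator $|R_0\sin\theta|$ is precisely the Cartwright-type computation the paper alludes to; your treatment is just more explicit, including the handling of the rays $\theta=0,\pi$ and the remark about the possible meaning of the prime.
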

\begin{proof}
The spherical Bessel functions ${j}_l(kR_0)$ and ${j}'_l(kR_0)$ behave asymptotically like $\frac{\sin R_0 k}{k}$ and $\cos R_0 k$ respectively by considering the analysis in~(\ref{2.3}). The analysis on the Bessel function is classic \cite{Ab}. We refer the computation on their indicator functions to Cartwright theory \cite{Boas,Cartwright2,Koosis,Levin,Levin2}. We have applied the technique in inverse problems \cite{Chen,Chen3,Chen5,Chen9,Colton6}.
\end{proof}
\begin{lemma}\label{36}
The following asymptotic identity holds.
\begin{equation}\label{3112}
h_{\hat{D}_l(k;R_0)}(\theta)=h_{{j}'_l(kR_0)}(\theta)+h_{\hat{y}_l(R_0;k)}(\theta),\,\theta\in[0,2\pi].
\end{equation}
\end{lemma}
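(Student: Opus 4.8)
The plan is to treat $\hat{D}_l(k;R_0)$ as the $2\times 2$ Wronskian-type determinant that it is, expanding it as the sum of two products of entire functions of $k$,
\begin{equation}\nonumber
\hat{D}_l(k;R_0)=-\,j_l(kR_0)\Bigl\{\tfrac{\hat{y}_l(r)}{r}\Bigr\}'\big|_{r=R_0}+\frac{\hat{y}_l(R_0;k)}{R_0}\,\{j_l(kr)\}'\big|_{r=R_0},
\end{equation}
and to compute the indicator of each piece with the calculus of Lemma~\ref{333}. First I would fix the exponential type of every factor. By Lemma~\ref{37} we have $h_{j_l(kR_0)}(\theta)=h_{j'_l(kR_0)}(\theta)=|R_0\sin\theta|$, and since $\{j_l(kr)\}'|_{r=R_0}=k\,j_l'(kR_0)$ differs from $j_l'(kR_0)$ only by the polynomial factor $k$, it carries the same indicator. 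For the two factors assembled from $\hat{y}_l$ I would substitute the asymptotics of Lemma~\ref{21}; because $\xi'(R_0)=[n(R_0\hat{x})]^{1/2}=1$ outside the support of $1-n$, both $\hat{y}_l(R_0;k)$ and $\{\hat{y}_l/r\}'|_{R_0}$ inherit exponential type $\xi(R_0)$, so that $h_{\hat{y}_l(R_0;k)}(\theta)=\xi(R_0)|\sin\theta|$.

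With the types recorded, the calculus of Lemma~\ref{333} supplies the upper bound. Each summand is a product of a Bessel-type factor and a $\hat{y}_l$-type factor; the Bessel factors are of completely regular growth by Cartwright's Theorem~\ref{C}, being bounded on the real axis, so the inequality~(\ref{2115}) becomes an equality and each product has indicator $(R_0+\xi(R_0))|\sin\theta|$. Feeding this into~(\ref{2.16}) gives
\begin{equation}\nonumber
h_{\hat{D}_l(k;R_0)}(\theta)\le (R_0+\xi(R_0))|\sin\theta|=h_{j'_l(kR_0)}(\theta)+h_{\hat{y}_l(R_0;k)}(\theta),
\end{equation}
which is one half of the asserted identity~(\ref{3112}).

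The delicate half is the matching lower bound, and this is where I expect the real work to lie. Both summands carry the \emph{same} indicator $(R_0+\xi(R_0))|\sin\theta|$, so the equality clause of~(\ref{2.16}), which requires the two indicators to differ at some direction, is unavailable, and a priori the top-order exponentials could cancel. Since $\hat{y}_l(R_0;k)$ is bounded on the real axis by Lemma~\ref{21}, $\hat{D}_l(k;R_0)$ is itself bounded there and hence of completely regular growth by Theorem~\ref{C}, so its indicator is governed by honest radial limits and it suffices to compute the leading behaviour along each ray $\arg k=\theta$. The plan is to insert the leading asymptotics $j_l(kR_0)\sim\frac{\sin(kR_0-l\pi/2)}{kR_0}$, $k\,j_l'(kR_0)\sim\frac{\cos(kR_0-l\pi/2)}{R_0}$ and $\hat{y}_l(R_0;k)\sim\frac{1}{k}\sin\{k\xi(R_0)-l\pi/2\}$, together with the companion expansion of $\{\hat{y}_l/r\}'|_{R_0}$, and to extract the coefficient of $e^{(R_0+\xi(R_0))|\Im k|}$ in $\hat{D}_l$. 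The main obstacle is to certify that this coefficient does not vanish: because the trigonometric factors recombine through the angle-addition formulas, the highest exponential in the two products is exactly the one most prone to cancellation, so controlling the leading coefficient — rather than merely bounding types — is the crux on which the identity~(\ref{3112}) stands or falls.
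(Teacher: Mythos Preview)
Your route and the paper's diverge precisely at the ``crux'' you isolate. The paper does \emph{not} try to rule out cancellation in the sum; instead it factors the determinant multiplicatively,
\[
\hat{D}_l(k;R_0)=\frac{k\,j'_l(kR_0)\,\hat{y}_l(R_0;k)}{R_0}\Bigl\{\hat{\alpha}_l(k)+O\bigl(\tfrac{1}{k}\bigr)\Bigr\},\qquad
\hat{\alpha}_l(k):=1-\frac{1}{k}\,\frac{j_l(kR_0)}{j'_l(kR_0)}\,\frac{\hat{y}'_l(R_0;k)}{\hat{y}_l(R_0;k)},
\]
argues that the bracket has indicator zero, and then reads off~(\ref{3112}) from the product rule~(\ref{2115}), the prefactor being a product of functions of completely regular growth with indicators $|R_0\sin\theta|$ and $|\xi(R_0)\sin\theta|$ by Lemmas~\ref{37} and~\ref{38}. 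So the paper replaces your additive cancellation problem by a multiplicative one, where the ``small'' factor is only polynomially small and hence contributes $0$ to the indicator.

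There is a genuine obstruction to your plan as written. If you actually insert the leading asymptotics you list, the top exponential \emph{does} cancel: with $a=kR_0-\tfrac{l\pi}{2}$ and $b=k\xi(R_0)-\tfrac{l\pi}{2}$ one gets, at order $1/k$,
\[
\hat{D}_l(k;R_0)\sim\frac{1}{kR_0^{2}}\bigl(\cos a\,\sin b-\sin a\,\cos b\bigr)=\frac{\sin\bigl(k(\xi(R_0)-R_0)\bigr)}{kR_0^{2}},
\]
whose exponential type is $|\xi(R_0)-R_0|$, not $R_0+\xi(R_0)$. Thus the coefficient of $e^{(R_0+\xi(R_0))|\Im k|}$ that you hoped to show nonzero is in fact zero at leading order; your program of ``extract the leading coefficient and certify nonvanishing'' fails at exactly the step you flagged. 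The full type $R_0+\xi(R_0)$ can only come from the next-order corrections, and tracking those directly is awkward. The paper's factorization is the device that circumvents this: the vanishing you would have found is absorbed into $\hat{\alpha}_l(k)+O(1/k)$, which decays only like $1/k$ off the real axis and therefore has indicator zero, so that~(\ref{2115}) still delivers $h_{\hat{D}_l}=h_{j'_l}+h_{\hat{y}_l}$. Your upper-bound half is fine and matches the paper in spirit; for the lower bound you should switch to the multiplicative factorization rather than pursue the additive leading-coefficient check.
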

\begin{proof}
We begin with~(\ref{D}).
\begin{eqnarray}\label{DD}
\hat{D}_l(k;R_0)
&=&-j_l(k R_0)\frac{{\hat{y}}'_l(R_0;k)}{R_0}+j_l(k R_0)\frac{\hat{y}_l(R_0;k)}{R_0^2}+k {j}'_l(k R_0)\frac{\hat{y}_l(R_0;k)}{R_0}\vspace{8pt}\\\nonumber
&=&\frac{k{j}'_l(k R_0)\hat{y}_l(R_0;k)}{R_0}\{1-\frac{1}{k}\frac{j_l(k R_0)}{{j}'_l(k R_0)}\frac{{\hat{y}}'_l(R_0;k)}{\hat{y}_l(R_0;k)}
+\frac{1}{k R_0}\frac{j_l(k R_0)}{{j}'_l(k R_0)}\}\vspace{8pt}\\
&=&\frac{k{j}'_l(k R_0)\hat{y}_l(r;k)}{R_0}\{\hat{\alpha}_l(k)+O(\frac{1}{k})\},\label{216}
\end{eqnarray}
in which
\begin{equation}\label{3.16}
\hat{\alpha}_l(k):=1-\frac{1}{k}\frac{j_l(k R_0)}{{j}'_l(k R_0)}\frac{{\hat{y}}'_l(R_0;k)}{\hat{y}_l(R_0;k)},
\end{equation}
where we see that non-zero $$\frac{j_l(k R_0)}{{j}'_l(k R_0)}=O(1)$$ and non-zero $$\frac{{\hat{y}}'_l(R_0;k)}{\hat{y}_l(R_0;k)}=O(k)$$ away from its poles. Moreover, Lemma \ref{21} implies that ${\hat{y}}'_l(R_0;k)$ and $k\hat{y}_l(R_0;k)$ are asymptotically periodic functions. They are bounded when suitably away from the real axis.  Thus,~(\ref{33333}) shows the Lindel\"{o}f's indicator function $h_{\hat{\alpha}_l}(\theta)=0$. We refer the step-by-step computation to \cite{Boas,Chen,Chen3,Chen5,Levin,Levin2}. However, Lindel\"{o}f's indicator function for~(\ref{216}) is
\begin{equation}\label{2.15}
h_{\hat{D}_l(k;R_0)}(\theta)=h_{{j}'_l(kR_0)}(\theta)+h_{\hat{y}_l(R_0;k)}(\theta),\,\theta\neq0.
\end{equation}
Here we use~(\ref{2.15}).
\par
If $\hat{\alpha}_l(k)\equiv0$, then we have the non-zero second term in~(\ref{DD}). The indicator function is calculated similarly, and thus~(\ref{3112}) is proven again.

\end{proof}
\begin{remark}
We do not assume the minimal support of the perturbation $\{1-n\}$ in our assumption to~(\ref{1.1}). The indicator function~(\ref{2.15}) do not give the density function of the spectra of~(\ref{1.1}) through the Cartwright-Levinson Theorem \ref{C}.  
\end{remark}
\begin{lemma}\label{38}
We have the following indicator functions for $\hat{y}_l(r;k)$ and $\hat{y}'_l(r;k)$ for $r\geq R$.
$$h_{{\hat{y}}'_l(r;k)}(\theta)=h_{\hat{y}_l(r;k)}(\theta)=|\xi(r)||\sin\theta|,\,\theta\in[0,2\pi].$$
\end{lemma}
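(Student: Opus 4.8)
The plan is to extract $h_{\hat y_l}$ directly from the leading asymptotics of $\hat y_l(r;k)$ in $k$, the decisive point being a phase combination that raises the naive exponential type $|\xi(r)-R|$ to the full optical length $\xi(r)$. I would begin from the estimate \eqref{2.9} with the matching data \eqref{266}--\eqref{277}, which is what produced \eqref{214} in Lemma \ref{21}; since \eqref{2.9} is valid for all $|k|\ge 1$ (not merely near $\mathbb R$), it controls the growth of $\hat y_l(r;k)=[j_l(Rk)+Rkj_l'(Rk)]\frac{\sin\{k[\xi(r)-R]\}}{k}+Rj_l(Rk)\cos\{k[\xi(r)-R]\}+E(\xi,k)$ in every direction $\theta$, with the remainder bounded by $|E(\xi,k)|\le \tilde K(\xi)|k|^{-1}\exp\{|\Im k|(\xi(r)-R)\}$. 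The fixed factor $[n(r\hat x)]^{-1/4}$ relating $\hat y_l$ to the Liouville variable $z_l$ is independent of $k$, hence irrelevant to the indicator.

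Next I would insert the classical large-argument asymptotics used in Lemma \ref{37}, namely $j_l(Rk)=(Rk)^{-1}\sin(Rk-l\pi/2)+O(|k|^{-2})$ and $Rkj_l'(Rk)=\cos(Rk-l\pi/2)+O(|k|^{-1})$, which hold uniformly for large $|k|$ throughout the plane because $\sin$ and $\cos$ reproduce the growth $e^{|\Im\cdot|}$ in both half-planes. Writing $X:=Rk-l\pi/2$ and $Y:=k[\xi(r)-R]$, the two principal terms collapse by the addition formula $\cos X\sin Y+\sin X\cos Y=\sin(X+Y)$ with $X+Y=k\xi(r)-l\pi/2$, so that $\hat y_l(r;k)=\frac1k\sin\{k\xi(r)-l\pi/2\}+(\text{subordinate terms})$. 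Since $|k^{-1}\sin\{k\xi(r)-l\pi/2\}|\sim \tfrac1{2|k|}e^{|\Im k|\xi(r)}$, the definition \eqref{33333} gives indicator $\xi(r)|\sin\theta|=|\xi(r)||\sin\theta|$ for this leading term.

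It then remains to verify that every subordinate term has strictly smaller indicator, so that Lemma \ref{333} pins the indicator of the sum to that of the leader. The secondary product-to-sum contribution is a multiple of $\sin\{k[2R-\xi(r)]-l\pi/2\}$, of exponential type $|2R-\xi(r)|$; because $\xi(r)>R>0$ for $r>R$ one has $|2R-\xi(r)|<\xi(r)$, so its indicator is strictly below $\xi(r)|\sin\theta|$. The $O(|k|^{-1})$ Bessel corrections carry the same oscillations but an extra negative power of $k$ and so do not raise the type, while $E$ has type at most $\xi(r)-R<\xi(r)$. As the leading and subordinate indicators are unequal, the equality case of \eqref{2.16} yields $h_{\hat y_l(r;k)}(\theta)=\xi(r)|\sin\theta|$. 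For the derivative I would use the analogous asymptotics for $\partial_\xi z_l$: since $z_l(\xi;k)\sim k^{-1}\sin(k\xi-l\pi/2)$ one has $\partial_\xi z_l\sim\cos(k\xi-l\pi/2)$, whose growth $\tfrac12 e^{|\Im k|\xi(r)}$ again gives indicator $\xi(r)|\sin\theta|$; since $\partial_r=[n(r\hat x)]^{1/2}\partial_\xi$ and the extra $z_l$-term is subordinate, $h_{\hat y_l'(r;k)}(\theta)=\xi(r)|\sin\theta|$ as well.

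The hard part will be the phase upgrade in the second step: one must check that the two principal oscillations genuinely add to $\sin\{k\xi(r)-l\pi/2\}$ rather than partially cancel, and that the uniform-in-$\theta$ Bessel asymptotics together with \eqref{2.9} hold every error term strictly below exponential type $\xi(r)$. This is exactly where the ``two-way'' construction pays off: the inner phase $Rk=k\int_0^R[n]^{1/2}\,d\rho$ stored in the Bessel coefficients fuses with the outer phase $k\int_R^r[n]^{1/2}\,d\rho$ to rebuild the total length $k\xi(r)=k\int_0^r[n]^{1/2}\,d\rho$.
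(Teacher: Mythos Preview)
Your approach is correct and is a detailed elaboration of what the paper's terse proof (which merely cites Lemma~\ref{21} and the Liouville transform) must mean; in particular you make explicit the key phase recombination $X+Y=k\xi(r)-l\pi/2$ that upgrades the naive type $\xi(r)-R$ to the full optical length $\xi(r)$, a step the paper leaves implicit.

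One technical correction is needed. The ODE remainder $E$ does \emph{not} have exponential type $\xi(r)-R$: the bound~\eqref{2.9} is stated for fixed real $a,b$, whereas here the initial data~\eqref{266}--\eqref{277} themselves carry type $R$ in $k$, so after rescaling one gets $|E|\le C|k|^{-2}e^{\xi(r)|\Im k|}$. Its indicator is therefore $\le\xi(r)|\sin\theta|$, equal to (not strictly below) that of the leading term, and the strict-inequality clause of~\eqref{2.16} does not apply to $E$. The conclusion nonetheless survives by a direct estimate: for $\theta\neq 0,\pi$ one has $|\hat y_l(r;k)|=\tfrac{1}{2|k|}e^{\xi(r)|\Im k|}\bigl(1+O(|k|^{-1})\bigr)$, which yields $h_{\hat y_l}(\theta)=\xi(r)|\sin\theta|$ immediately; at $\theta=0,\pi$ the boundedness on $\mathbb{R}$ from Lemma~\ref{21} gives $h=0$. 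With this adjustment your argument is complete and matches the paper's route.
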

\begin{proof}
We apply~(\ref{2.15}) to Lemma \ref{21}. The Liouville transform of~(\ref{2.1}) is
$$
\xi(r)=\int_0^r\sqrt{n(\rho\hat{x})}d\rho,\,r\geq R,
$$in which $n=1$ outside $D'\setminus D$ by assumption.

\end{proof}

We look for a zero set of certain entire function that contains the eigenvalues of~(\ref{1.1}). The sequence $\{\hat{D}_{l}(k;r=R_{0})\}_{{\hat{x}}}$ plays a role in this paper.
\begin{lemma}\label{41}
$k$ is an eigenvalue of~(\ref{1.1})  if and only if $k$ is zero of $\hat{D}_{l}(k;r=R_0)$ for some $l$ and some $\hat{x}\in \mathbb{S}^2$, where $R_0$ is sufficiently large and given in~(\ref{1.3}).
\end{lemma}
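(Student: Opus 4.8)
The plan is to translate the eigenvalue condition for the system~\eqref{1.1} into the vanishing of the scalar determinant $\hat{D}_l(k;R_0)$ at the outer sphere $r=R_0$, passing through the spherical-harmonic decomposition~\eqref{1.3} and the two-way ODE construction of Section~2. I would prove the two implications separately.

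First I would treat the forward direction. Suppose $k$ is an eigenvalue, so there is a nontrivial pair $(w,v)$ solving~\eqref{1.1} together with the radiation conditions. Expanding both in the complete orthonormal system $\{Y_l^m\}$ as in~\eqref{1.3}, the orthogonality of the spherical harmonics forces each Fourier mode to satisfy the first two equations of~\eqref{1.1} independently, as already recorded in~\eqref{18}. Nontriviality of $(w,v)$ means that for at least one index $(l,m)$ the pair of coefficients $(a_{l,m},b_{l,m})$ does not vanish. For that mode the transmission conditions on $\partial D$, propagated out to $r=R_0$ by the radial ODE~\eqref{1.7} and its two-way solvability (Lemma~\ref{21} and the reversibility noted afterward), yield exactly the two linear relations~\eqref{1.10}. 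These form a homogeneous $2\times2$ system in $(a_{l,m},b_{l,m})$ whose coefficient matrix is the one displayed in~\eqref{D}; since it admits a nontrivial solution, its determinant $\hat{D}_l(k;R_0)$ must vanish. Because~\eqref{1.7} and $j_l$ depend on the index only through $l(l+1)$, this determinant is independent of $m$, so the conclusion holds for that $l$ and the direction $\hat{x}$ along which $\hat{y}_l$ was continued.

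For the converse, I would start from a zero $k$ of $\hat{D}_l(k;R_0)$ for some $l$ and some $\hat{x}$. Vanishing of the determinant in~\eqref{D} produces a nontrivial solution $(a_{l,m},b_{l,m})$ of the system~\eqref{1.10} at $r=R_0$. I would then define the modal pair $(v_{l,m},w_{l,m})$ by~\eqref{18}, with $\hat{y}_l$ the solution of~\eqref{1.7} fixed by its prescribed behavior at the origin and continued outward along $\hat{x}$. The two-way construction of Section~2, solving~\eqref{2.3} from $\xi=R$ toward the origin and using~\eqref{2.9} from $\xi=R$ toward infinity with the matched data~\eqref{266}--\eqref{277}, guarantees that this $\hat{y}_l$ is a genuine solution of the perturbed radial equation realizing the transmission conditions on $\partial D$. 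The resulting modal pair is a nontrivial eigenfunction pair of~\eqref{1.1} satisfying the radiation conditions, so $k$ is an eigenvalue.

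The main obstacle I anticipate is the bridge between the matching imposed at the fixed outer sphere $r=R_0$, where $n\equiv1$ and both radial functions solve the same Bessel equation, and the transmission across the direction-dependent boundary $\partial D$ given by $R=R(\hat{x})$. Making this rigorous requires the rotation argument that reduces the nonradial, $\hat{x}$-dependent geometry to the one-parameter family of radial problems~\eqref{1.7}, together with a careful check that the analytic continuation of the series~\eqref{1.3} into $|x|\le R_0$ is well defined and that the Wronskian-type determinant at $R_0$ encodes precisely the Cauchy data across $\partial D$. The growth estimates~\eqref{2.8}--\eqref{2.9} are what keep the continuation and the determinant finite and the equivalence reversible; verifying that no spurious zeros are introduced and no genuine modes are lost in passing between the PDE spectrum and the ODE spectrum is the delicate point.
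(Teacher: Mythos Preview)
Your forward direction matches the paper's and is correct.

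The converse has a genuine gap. You construct the modal pair $(v_{l,m},w_{l,m})$ from~\eqref{18} using the single radial function $\hat{y}_l$ obtained along the one direction $\hat{x}$ at which $\hat{D}_l(k;R_0)=0$. But $\hat{y}_l$ depends on $\hat{x}$ through the coefficient $n(r\hat{x})$ in~\eqref{1.7}; plugging one ray's $\hat{y}_l$ into $w_{l,m}(x)=b_{l,m}\frac{\hat{y}_l(r)}{r}Y_l^m(\hat{x})$ does not produce a solution of $\Delta w+k^2 n(x)w=0$ off that ray, so you have not built a three-dimensional eigenfunction pair for~\eqref{1.1}. You flag this as an obstacle but do not resolve it, and the ``rotation argument'' you mention is not the mechanism the paper uses.

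The paper's missing ingredient is this: since $R_0$ lies outside $\mathrm{supp}(1-n)$, the matching data at $r=R_0$ reduce to $j_l(k_0R_0)$ and $\partial_r j_l(k_0R_0)$, which are \emph{independent of the direction} $\hat{x}$ (Rellich). Hence the same Cauchy data~\eqref{1.10} at $R_0$ serve as initial conditions for the radial ODE along \emph{every} $\hat{x}\in\mathbb{S}^2$. One then solves~\eqref{2.3} inward along each ray, via~\eqref{2.8}--\eqref{2.9} and~\eqref{214}, to produce a family $\{\hat{y}_l(r;k_0)\}_{\hat{x}}$ covering all of $\mathbb{R}^3$. Inside $D$ the resulting $w$ coincides with $v$, and taking normal derivatives near $\partial D$ gives the transmission condition, so $(w,v)$ is a genuine eigenfunction pair. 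Without this passage from one $\hat{x}$ to all $\hat{x}$, the converse does not close.
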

\begin{proof}
Let $k\in\mathbb{C}$ be an eigenvalue of~(\ref{1.1}). By Rellich theory, the expansion~(\ref{1.3}) holds uniquely.  Hence, $\hat{D}_{l}(k;r=R_0)=0$ holds for all and for some $l$ in particular. 

For the sufficient condition,  $\{v_{l,m}(x;k)\}$ and $\{w_{l,m}(x;k)\}$ in~(\ref{18}) are independent eigenfunctions of Helmholtz equation for each $(l,m)$ for $|x|\geq R_0$.  If $k_0$  solves $\widetilde{D}_{l}(k;R_0)=0$ for some $l$ and for some $\tilde{x}\in\mathbb{S}^2$, then  for this $k_0$ we deduce from~(\ref{1.10}) that
\begin{eqnarray}\label{331}
\left\{\begin{array}{ll}
j_l(k_0 r) |_{r=R_0}=\frac{\tilde{y}_l(r;k_0)}{r}|_{r=R_0};\vspace{5pt}\\
\partial_r j_l(k_0 r)|_{r=R_0} =\partial_r\frac{ \tilde{y}_l(r;k_0)}{r}|_{r=R_0},
\end{array}
\right.
\end{eqnarray}
which holds \textbf{independently} for all $\hat{x}\in\mathbb{S}^2$ by Rellich theory. Let us consider~(\ref{331}) as an initial condition that works for all $\hat{x}\in\mathbb{S}^2$.
By the uniqueness and existence of ODE~(\ref{2.3}) and~(\ref{331}), the given $k_0$ defines some coefficients $\hat{y}_l(r;k_0)$ in $\mathbb{R}^3$ constructed as in~(\ref{2.8}),~(\ref{2.9}), and~(\ref{214}) for all other $\hat{x}\in\mathbb{S}^2$.  The extension routes of the Fourier coefficients $\hat{y}_l(r;k_0)$ up to $r=0$ are illustrated in the Figure 1. Therefore,  there exists eigenfunction pair $w(x;k_0)=v(x;k_0)$ inside $D$, and $\hat{D}_{l}(k_0;r=R)=0$. Taking directional derivatives near $D$, $$ \frac{\partial w(x;k_0)}{\partial \nu}=\frac{\partial v(x;k_0)}{\partial \nu}\mbox{ on }\partial D.$$
That makes $(w,v)$ a pair of eigenfunctions of~(\ref{1.1}).

\end{proof}
\begin{figure}\begin{center}\includegraphics[scale=0.3]{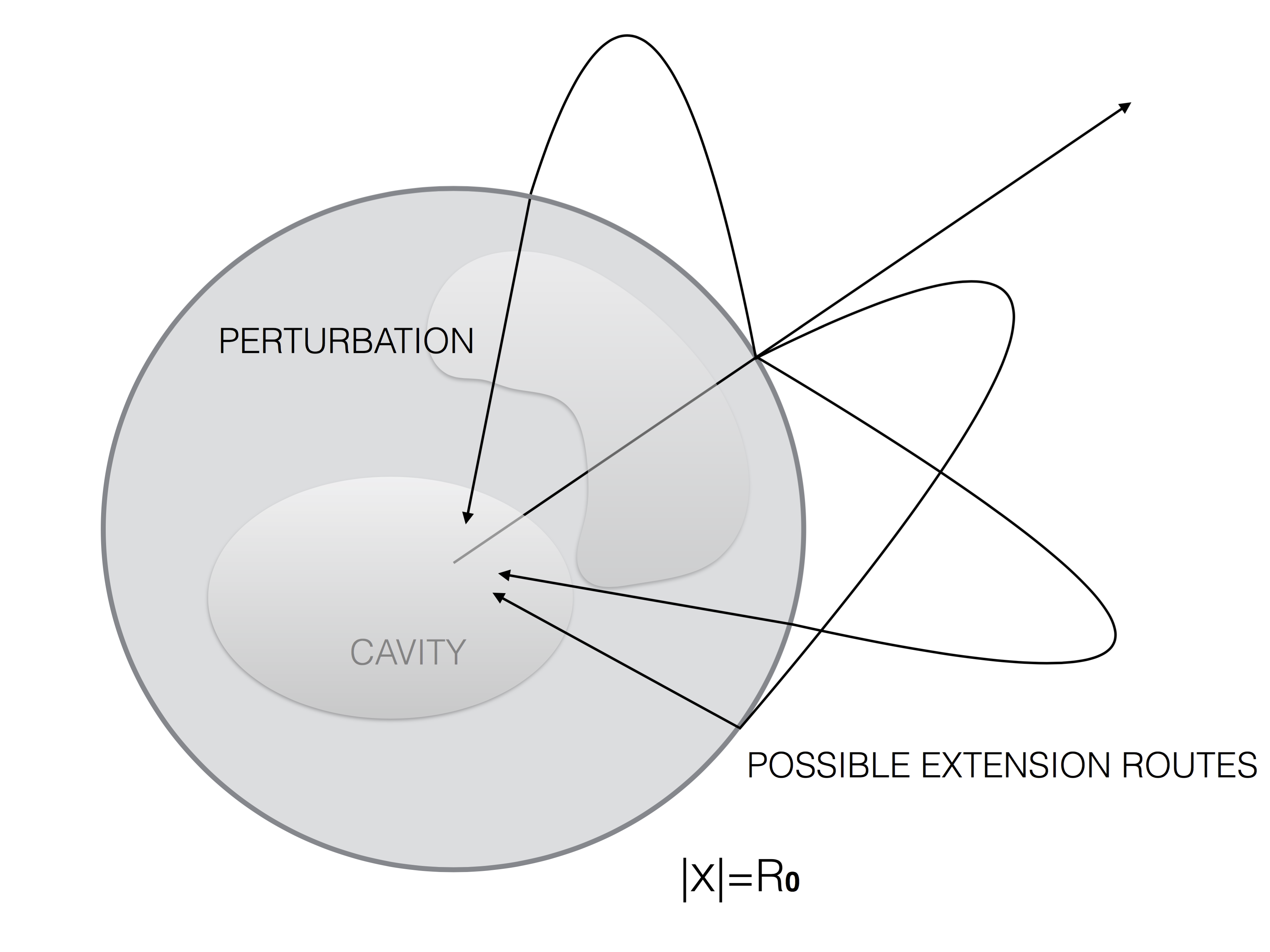}\caption{Rays of Extension Routes}\end{center}\end{figure}
The effective support of $\{1-n\}$ may not be minimal as the shown by the following lemma.
\begin{lemma}\label{3388}
Given a $\hat{x}\in\mathbb{S}^2$ and a fixed $k$, $\hat{D}_{l}(k;r)$ is locally constant near $r=R_0$ whenever $x\notin {\rm supp }\{1-n\}$.
\end{lemma}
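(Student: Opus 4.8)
The plan is to exploit the hypothesis directly. Since $x=r\hat{x}\notin{\rm supp}\{1-n\}$ and this support is closed, there is a full neighborhood $(R_0-\delta,R_0+\delta)$ on which $n(\rho\hat{x})=1$. On that neighborhood the potential term of the radial equation~(\ref{1.7}) disappears, so $\hat{y}_l(r;k)$ solves the free equation $y_l''+(k^2-\frac{l(l+1)}{r^2})y_l=0$; dividing by $r$ then shows that $\frac{\hat{y}_l(r)}{r}$ satisfies the spherical Helmholtz radial operator $u''+\frac{2}{r}u'+(k^2-\frac{l(l+1)}{r^2})u=0$, which is precisely the equation satisfied by $j_l(kr)$. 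Thus both entries of the determinant~(\ref{D}) are solutions of one and the same second-order linear ODE, and the expansion~(\ref{DD}) (valid verbatim for general $r$, not only $R_0$) confirms that $\hat{D}_l(k;r)$ is, up to sign, their Wronskian $W\bigl(j_l(kr),\tfrac{\hat{y}_l(r)}{r}\bigr)$.

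The key computation is then Abel's identity. Differentiating $\hat{D}_l(k;r)=-W\bigl(j_l(kr),\tfrac{\hat{y}_l(r)}{r}\bigr)$ in $r$ and using the common ODE to eliminate the second derivatives, the first-order coefficient $p(r)=\frac{2}{r}$ produces $\frac{d}{dr}\hat{D}_l=-\frac{2}{r}\hat{D}_l$, equivalently $\frac{d}{dr}\bigl(r^2\hat{D}_l(k;r)\bigr)=0$ on $(R_0-\delta,R_0+\delta)$. The cleanest way to package this is to pass to the Liouville coordinate $\xi$ of~(\ref{2.1})--(\ref{2.3}): where $n\equiv1$ one has $\frac{d\xi}{dr}=\sqrt{n}=1$ and the transformed equation carries no first-order term, so by Abel the Wronskian of any two of its solutions is literally constant in $\xi$, hence in $r$. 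Either way, $\hat{D}_l(k;\cdot)$ can neither gain nor lose a zero as $r$ varies over the neighborhood: if it vanishes at $R_0$ it vanishes on all of $(R_0-\delta,R_0+\delta)$, and otherwise it is nowhere zero there. This is the asserted local constancy of the eigenvalue condition $\hat{D}_l=0$, and it is exactly what Lemma~\ref{41} needs to make the detected spectrum independent of the measurement radius.

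The step to watch is the weight $r^{-2}$. Because the spherical Helmholtz operator carries the damping term $\frac{2}{r}u'$, the Wronskian in the $r$-variable is not itself constant but decays like $r^{-2}$; it is $r^2\hat{D}_l$, equivalently the Wronskian in the Liouville variable $\xi$, that is truly constant. So the phrase ``locally constant'' is best read as the statement that the vanishing set of $r\mapsto\hat{D}_l(k;r)$ is locally constant (all or nothing), and I would state the lemma in that form to avoid conflating $\hat{D}_l$ with $r^2\hat{D}_l$. Finally, since solutions of~(\ref{2.3}) depend analytically on $r$, the function $\hat{D}_l(k;\cdot)$ is real-analytic, so the local dichotomy propagates: the zero/nonzero behavior is constant throughout the entire connected component of the homogeneous region outside ${\rm supp}\{1-n\}$ that contains $R_0$.
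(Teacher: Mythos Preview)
Your argument rests on the same key observation as the paper's: outside ${\rm supp}\{1-n\}$ both $j_l(kr)$ and $\hat{y}_l(r)/r$ satisfy the same second-order linear ODE. The paper's proof is terser---it appends the initial condition~(\ref{1.10}) and appeals to ``uniqueness of ODE,'' which amounts to saying that if the two solutions share Cauchy data at $R_0$ (i.e.\ $\hat{D}_l=0$ there) then they coincide throughout the homogeneous region, so $\hat{D}_l$ stays zero. Your Abel-identity computation is more explicit and handles both the vanishing and non-vanishing cases at once, and you correctly isolate the $r^{-2}$ weight: it is $r^2\hat{D}_l(k;r)$ (equivalently the Wronskian of $rj_l(kr)$ and $\hat{y}_l(r)$, which solve an equation with no first-order term) that is literally constant, not $\hat{D}_l$ itself. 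The paper's phrasing ``locally constant'' is thus slightly loose, and your reading---that the \emph{vanishing set} of $r\mapsto\hat{D}_l(k;r)$ is locally constant---is the precise statement and is all that Lemma~\ref{41} and the later arguments actually require.
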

\begin{proof}
Let  us add the initial condition~(\ref{1.10}) to $$\hat{y}_l''(r)+(k^2n(r\hat{x})-\frac{l(l+1)}{r^2})\hat{y}_l(r)=0.$$ The function $j_l(k r)$ and $ \frac{\hat{y}_l(r)}{r}$ satisfy the same ODE outside the perturbation. The lemma is proven by the uniqueness of ODE.

\end{proof}
\begin{lemma}\label{33}
$\hat{D}_{l}(k;r)=0$ for $x\in D$.
\end{lemma}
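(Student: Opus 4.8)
The plan is to show that throughout the cavity $D$ the Fourier coefficient $\hat{y}_l(r;k)$ coincides identically with $rj_l(kr)$, so that the two columns of the determinant in~(\ref{D}) become linearly dependent and the determinant vanishes. The mechanism is that $D$ carries no inhomogeneity: since ${\rm supp}(1-n)$ lies outside $D$, we have $n\equiv 1$ on $\bar D$, and hence for $x\in D$ the radial equation~(\ref{1.7}) reduces to the free spherical Bessel equation
$$\hat{y}_l''+\Big(k^2-\frac{l(l+1)}{r^2}\Big)\hat{y}_l=0,$$
which is exactly the equation solved by $rj_l(kr)$. Thus $j_l(kr)$ and $\hat{y}_l(r)/r$ satisfy one and the same second-order linear ODE on $D$, and it remains to see that they are the same solution.

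To pin down the solution I would use the initial condition in~(\ref{1.7}), $\lim_{r\to0^+}\{\hat{y}_l(r)/r-j_l(kr)\}=0$, at the regular singular point $r=0$. The two Frobenius exponents of the equation are $l+1$ and $-l$, and $rj_l(kr)\sim r^{l+1}$ is the regular branch; the initial condition therefore forces $\hat{y}_l$ to carry no $r^{-l}$ component and to have the same normalization as $rj_l(kr)$, so by uniqueness of the regular solution one gets $\hat{y}_l(r)=rj_l(kr)$ for every $r$ with $x\in D$, and this holds for \emph{every} $k\in\mathbb{C}$, not merely for eigenvalues. Equivalently, one may start from the Cauchy data~(\ref{266}) and~(\ref{277}) at $r=R$ and integrate inward: these data are precisely the first-order matching of $\hat{y}_l(r)/r$ with $j_l(kr)$ at $\partial D$, so standard ODE uniqueness on $(0,R]$ again yields $\hat{y}_l(r)/r\equiv j_l(kr)$ on $D$. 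With this identity in hand, the second column of~(\ref{D}) equals the negative of the first (entry and $r$-derivative alike), whence $\hat{D}_l(k;r)=0$ for all $x\in D$. I would close by remarking that this is the interior counterpart of Lemma~\ref{3388}: wherever $n\equiv 1$ the quantity $\hat{D}_l(k;\cdot)$ is an $r$-Wronskian of two solutions of the same equation and is rigid by Abel's identity, and the origin condition simply fixes that rigid value to be zero inside the cavity, whereas across the annulus $D'\setminus D$ the two functions solve different equations, the Wronskian picks up a source proportional to $(1-n)$, and $\hat{D}_l(k;R_0)$ is generically nonzero, vanishing only at the eigenvalues of Lemma~\ref{41}.

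The step I expect to require the most care is the treatment of the regular singular point at $r=0$: one must verify that the normalization in~(\ref{1.7}) genuinely isolates the regular branch $\sim r^{l+1}$ and suppresses the singular branch $\sim r^{-l}$, and that the uniqueness argument is applied on $(0,R]$ (or on $[\varepsilon,R]$ followed by $\varepsilon\to0^+$) rather than across the singularity itself. Once $\hat{y}_l(r)/r=j_l(kr)$ is established on $D$, the remaining conclusion is the elementary observation that a $2\times 2$ determinant with proportional columns vanishes.
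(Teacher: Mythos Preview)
Your argument is correct and rests on the same mechanism as the paper's: inside $D$ one has $n\equiv 1$, so $\hat{y}_l(r)/r$ and $j_l(kr)$ solve the same second-order ODE and agree by uniqueness, whence the two columns of~(\ref{D}) coincide and the determinant vanishes. The paper phrases this at the PDE level (``$w=v$ inside $D$'' together with Rellich's uniqueness to pass to the Fourier coefficients), whereas you carry it out directly at the ODE level via~(\ref{1.7}) and the Frobenius analysis at $r=0$; your route has the advantage of making the conclusion hold for \emph{every} $k\in\mathbb{C}$ rather than only for eigenvalues, and of making explicit why the normalization in~(\ref{1.7}) selects the regular branch $\sim r^{l+1}$. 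One small caution: your alternative of starting from~(\ref{266})--(\ref{277}) at $r=R$ and integrating inward is logically weaker, since those Cauchy data are precisely what $\hat D_l(k;R)=0$ asserts, so that route presupposes the boundary case of the lemma; your primary argument from the origin is the clean one.
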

\begin{proof}
We have $w=v$ inside $D$. The uniqueness of Rellich's lemma~(\ref{1.3}) and the uniqueness of ODE imply that
$\frac{\hat{y}_l(r;k)}{r}=j_l(kr)$ for $r\leq R$. This proves the lemma.

\end{proof}

\section{A Proof of Theorem \ref{14}}
\begin{proof}
Let $n^1$ and $n^2$ be two indices of refraction with solutions $y_l^1(r;k)$ and $y_l^2(r;k)$ respectively with  the identical set of exterior eigenvalues $\mathcal{E}$, with the density given by the indicator function in Lemma \ref{36}  and~(\ref{3.9}) in Theorem \ref{C}. 
\par
By applying Lemma \ref{41} and~(\ref{1.10}), we have for each fixed $\hat{x}\in\mathbb{S}^{2}$,
\begin{eqnarray}\label{5.1}
&&\hat{y}_0^1(r;k)=\hat{y}_0^2(r;k);\vspace{8pt}\\
&&\partial_r\hat{y}_0^1(r;k)=\partial_r\hat{y}_0^2(r;k),\,k\in\mathcal{E},\,r= R_0.
\end{eqnarray}
Let $$F(k):=\hat{y}_0^1( R_0;k)-\hat{y}_0^2( R_0;k).$$ According to Lemma \ref{333}, we know that the indicator function
$$h_{F}(\theta)=\max\{h_{\hat{y}_0^1(R_0;k)}(\theta),h_{\hat{y}_0^2(R_0;k)}(\theta)\},$$
in which, by Lemma \ref{38}, we have
$$h_{\hat{y}_0^j(R_0;k)}(\theta)=|R+\int_{R}^{R_0}\sqrt{n^j(\rho\hat{x})}d\rho||\sin\theta|,\,j=1,2.$$

\par

We apply  Lemma \ref{36}, and Lemma \ref{38} to deduce that
$$h_{\hat{D}_0^j(k;R_0)}(\theta)>h_{\hat{y}_0^j(R_0;k)}(\theta),\,j=1,2,$$ and thus the exterior spectrum
$\mathcal{E}$ renders greater angle-wise density than the solution set of~(\ref{5.1}). This contradicts the maximal density of the zero set in Cartwright Theorem as stated
in~(\ref{3.9}).
Hence,
\begin{eqnarray}
&&\hat{y}_0^1(R_0;k)\equiv\hat{y}_0^2(R_0;k);\\
&&\partial_r\hat{y}_0^1(R_0;k)\equiv \partial_r\hat{y}_0^2(R_0;k).
\end{eqnarray}
Hence, $n^1$ and $n^2$ have the same set of norming constants and two independent spectra, Dirichlet and Neumann, to the following equation.
\begin{eqnarray}
\left\{
  \begin{array}{ll}
    \hat{y}_l''+(k^2n(r\hat{x})-\frac{l(l+1)}{r^2})\hat{y}_l=0,\,0<r<R_{0};\vspace{5pt}\\
    \underset{r\rightarrow0^{+}}{\lim}\{\frac{y_l(r)}{r}-j_l(k r)\}=0.
  \end{array}
\right.
\end{eqnarray}
By the inverse uniqueness of the Bessel operator \cite[Theorem\,1.2,\,Theorem\,1.3]{Carlson2}, we have $n^1(r\hat{x})\equiv n^2(r\hat{x})$ in $0\leq r\leq R_0$. The argument can be carried to all $\hat{x}\in\mathbb{S}^2$. This proves Theorem \ref{14}.

\end{proof}


\begin{acknowledgement}
The author wants to thank Prof. Shixu Meng for providing the manuscript \cite{Colton6,Colton7} in the preparation of this work, Prof. Chao-Mei Tu at NTNU for proofreading an earlier version of this manuscript, and  anonymous referees for suggesting some references in value distribution theory.
\end{acknowledgement}


\begin{thebibliography}{widest-label}
\bibitem{Aktosun}T. Aktosun, D. Gintides and V. G. Papanicolaou,
The uniqueness in the inverse problem for transmission eigenvalues
for the spherically symmetric variable-speed wave equation,
Inverse Problems, V. 27, 115004 (2011).
\bibitem{Ab}M. Abramowitz and I. A. Stegun (editors), Handbook of Mathematical Functions,      National Bureau of Standards   Applied Mathematics Series, 55, Washington D. C., 1956.
\bibitem{Boas}R.P. Boas, Entire functions, Academic Press, New York, 1954.
\bibitem{Cakoni3}F. Cakoni, D. Colton and S. Meng, The inverse scattering problem for a penetrable cavity with internal measurements, AMS, Contemporary Mathematics, 615, 71-88 (2014).
\bibitem{Carlson2}R. Carlson, A Borg-Levinson theorem for Bessel operators, Pacific Journal of Mathematics, Vol. 177, No. 1, 1--26 (1997).
\bibitem{Carlson3}R. Carlson, Inverse Sturm-Liouville problems with a singularity at zero, Inverse Problems, 10, 851--864 (1994).
\bibitem{Cartwright2}M. L. Cartwright, Integral Functions,
Cambridge University Press, Cambridge, 1956.
\bibitem{Chen}L. -H. Chen, An uniqueness result with some density theorems with interior transmission eigenvalues,  Applicable Analysis, DOI:
    10.1080/00036811.2014.936403.
\bibitem{Chen3}L. -H. Chen, A uniqueness theorem on the eigenvalues of spherically symmetric interior transmission
problem in absorbing medium, Complex Variables and Elliptic Equations, DOI:
    10.1080/17476933.2014.900055, 2014.
\bibitem{Chen5}L. -H. Chen,
On the inverse spectral theory in a non-homogeneous interior transmission problem, Complex Var. Elliptic Equ, DOI:  10.1080/17476933.2014.970541.
\bibitem{Chen9}L. -H. Chen, 
An inverse uniqueness in interior transmission
problem and its eigenvalue tunneling in  simple domain, Forthcoming in ''Adv. Math. Pays.''
\bibitem{Colton} D. Colton and P. Monk, The inverse scattering problem for time-harmonic acoustic waves in an inhomogeneous medium, Q. Jl. Mech. appl. Math. Vol. 41, 97--125 (1988).
\bibitem{Colton2}D. Colton and
 R. Kress, Inverse Acoustic and Electromagnetic Scattering Theory,
2rd ed. Applied Mathematical Science, V. 93, Springer--Verlag, 2013.

\bibitem{Colton6}D. Colton, Y.-J. Leung and S. Meng, The inverse spectral problem for exterior transmission problems, Inverse Problems, 30, 055010 (2014).
\bibitem{Colton7}D. Colton and S. Meng, Spectral properties of the exterior transmission eigenvalue problem, Inverse Problems, 30, 105010 (2014).

\bibitem{Groemer}H. Groemer, Geometric Applications of Fourier Series and Spherical Harmonics, Cambridge University Press, New York, 1996.
\bibitem{Isakov}V. Isakov, Inverse problems for partial differential equations, Applied Mathematical Sciences, V. 127, Springer--Verlag, New York, 1998.
\bibitem{Kirsch86}A. Kirsch, The denseness of the far field patterns for the transmission problem, IMA J. Appl. Math, 37, no. 3, 213--225 (1986).
\bibitem{Koosis}P. Koosis, The Logarithmic Integral I, Cambridge University Press, New York, 1997.

\bibitem{Levin}B. Ja. Levin, Distribution of zeros of entire
functions, revised edition, Translations of Mathematical
Monographs, American Mathematical Society, Providence, 1972.

\bibitem{Levin2}B. Ja. Levin, Lectures on entire functions,
Translation of Mathematical Monographs, V. 150, AMS, Providence, 1996.
\bibitem{G} G. Hu, J. Li, and H. Liu, Uniqueness in determining refractive indices by formally determined far-field data. Appl. Anal, 94,  no. 6, 1259--1269 (2015).
\bibitem{Mc}J. R. McLaughlin and P. L. Polyakov, On the uniqueness
of a spherically symmetric speed of sound from transmission
eigenvalues, Jour. Differential Equations, 107, 351--382 (1994).

\bibitem{Po}J. P\"{o}schel and E. Trubowitz, Inverse Spectral Theory,
Academic Press, Orlando, 1987.

\end{thebibliography}
\end{document}